\numberwithin{equation}{section}
 \newtheorem{thm}{Theorem}[section]
 \newtheorem{cor}[thm]{Corollary}
 \newtheorem{lem}[thm]{Lemma}
 \newtheorem{prop}[thm]{Proposition}
 \theoremstyle{definition}
 \newtheorem{example}[thm]{Example}
 \theoremstyle{remark}
 \numberwithin{equation}{section}
\DeclareMathOperator{\Hom}{Hom}
 \DeclareMathOperator{\Ass}{Ass}
\DeclareMathOperator{\miin}{Min}
\DeclareMathOperator{\Ext}{Ext} 
\DeclareMathOperator{\Tor}{Tor}
\DeclareMathOperator{\depth}{depth}
\DeclareMathOperator{\rank}{rank} \DeclareMathOperator{\Ht}{ht}
\DeclareMathOperator{\grade}{grade}
\DeclareMathOperator{\A}{\alpha}
\DeclareMathOperator{\G}{\gamma}\DeclareMathOperator{\Dt}{\delta}
\DeclareMathOperator{\E}{\textit{E}}
\DeclareMathOperator{\om}{\omega}
\DeclareMathOperator{\pd}{pd} \DeclareMathOperator{\ot}{\otimes}
\renewcommand{\P}{\mbox{P}}
\newcommand{\bt}{\beta}
\newcommand{\xra}{\xrightarrow}
\newcommand{\vf}{\varphi}
\newcommand{\fm}{\mathfrak{m}}
\newcommand{\fp}{\frak{p}}
\newcommand{\fa}{\frak{a}}
\newcommand{\ra}{\rightarrow}
\begin{document}
\bibliographystyle{amsplain}

\title[Ideals whose first two Betti numbers are close]
{Ideals whose first two Betti numbers are close}

\author{Keivan Borna}

\address{Keivan Borna: Faculty of Mathematical Sciences and Computer, Tarbiat Moallem University, Tehran,
Iran and School of Mathematics, Institute for Research in
Fundamental Sciences (IPM),P.O. 19395-5746, Tehran, Iran.}

\email{borna@ipm.ir}

\author{S. H. Hassanzadeh}

\address{Seyed Hamid Hassanzadeh:
Faculty of Mathematical Sciences and Computer, Tarbiat Moallem
University, Tehran, Iran and School of Mathematics, Institute for
Research in Fundamental Sciences (IPM),P.O. 19395-5746, Tehran,
Iran.}

\email{h\_hassanzadeh@tmu.ac.ir}

\thanks{The research of Borna was in part
supported by grant No. 88130035 from IPM}
\thanks{The research of Hassanzadeh was in part
supported by grant No. 88130112 from IPM}

\keywords{Betti numbers, Residual Intersection.}

\subjclass[2000]{13D07, 13D02.}

\date{}

\begin{abstract}
For an ideal $I$ of a Noetherian local ring $(R,\fm,k)$ we show
that $\bt_1^R(I)-\bt_0^R(I)\geq -1$. It is demonstrated that some
residual intersections of an ideal $I$ for which
$\bt_1^R(I)-\bt_0^R(I)= -1\;\text{or}\;0$ are perfect. Some
relations between Betti numbers and Bass numbers of the canonical
module are studied.
\end{abstract}

\maketitle

\section*{Introduction}
The eventual behavior of the Betti sequence $\{\bt_i^R(M)\}$ has
been extensively studied, while the behavior of the initial Betti
numbers is not that well-known.

In the first section of this paper we are interested in studying
the relations between the first two Betti numbers. The motivations
for this section are two folds. The first, the inequality in
Proposition \ref{e:m1} which says that "for an ideal $I$ of a Noetherian local ring $(R,\fm,k)$,  $\bt_1^R(I)-\bt_0^R(I)\geq -1$" motivates us to study ideals where the
difference of their first two Betti numbers lives in the boarder.
The second comes from the roles of perfect ideals of height $2$
for which $\bt_1-\bt_0=-1$, and perfect Gorenstein ideals of
height $3$ where $\bt_1-\bt_0=0$, in the theory of {\it{residual
intersection}}.

As it is known, {\it{linkage theory}} is a basic tool to compare
and classify some families of algebraic structures. It involves
the direct calculations of the colon ideals $J=\fa:I$ where $\fa$
is a complete intersection ideal. Residual intersection arises
without some restriction and permits the comparison of objects in
very different dimensions. Precisely, we say that $J=\fa:I$ is an
$s$-residual intersection of $I$ if $\Ht J\geq s\geq\Ht I$
whenever $s\geq\mu(\fa)$. If in addition $\Ht(I+J)\geq s+1$, $J$
is a geometric $s$-residual intersection of $I$. Linkage preserves
the Cohen-Macaulay property (CM property). A famous question in
this area is ``When is a residual intersection ideal of $I$ CM?''
Answering this question involved a lot of attempts started by
Artin and Nagata \cite{AN}. Huneke \cite{H} showed that in a
Cohen-Macaulay local ring any geometric $s$-residual intersection
of an ideal $I$ which is {\it strongly Cohen-Macaulay} (or at least {\it sliding depth} \cite{HVV})and
satisfies the $G_{s+1}$ condition, is CM.
Trying to weaken the conditions imposed on $I$, Huneke and Ulrich
\cite{EHU} proved that if $I$ is evenly linked to an ideal which
satisfies $G_{s+1}$ and is strongly Cohen-Macaulay, then any
geometric $s$-residual intersection of $I$ is Cohen-Macaulay.
Examples of such ideals are perfect ideals of height $2$ and
perfect Goenstein ideals of height $3$. As it already pointed out,
perfect ideals of height $2$ has $\bt_1-\bt_0=-1$ and  perfect
Gorenstein ideals of height $3$ has $\bt_1-\bt_0=0$.

We show that in the presence of some $G_s$ condition, ideals with
small $\bt_1-\bt_0$ admit perfect geometric residual intersections
even if they do not satisfy the sliding depth condition. As well, we provide an example of
an (non CM) ideal with $\bt_1-\bt_0=0$ which admits a CM $3$-residual intersection but
does not satisfy the sliding depth condition. In the next propositions in the first section we try to identify some basic properties of ideals with $\bt_1-\bt_0=0,-1$.

Continuing to find more relations among initial Betti numbers, we
encountered with the Poincar\'{e} series of the canonical module
in the second section. In fact we deduced some relations between
Bass numbers of an ideal and Betti numbers of a canonical
module. Corollary \ref{cpoincare} which is one of the applications
of these relations shows that a partial sum of the Poincar\'{e} series of the canonical module $\omega_{R/J}$ is positive or negative if and only if the grade of $J$ is even or odd, respectively.


\section*{Setup}
Throughout this paper $(R,\fm,k)$ is a Noetherian local ring of
dimension $d$, $I$ is an ideal of  $R$ and  $M$ is a finitely
generated $R$-module. By $\bt_i^R(M)$ (or $\bt_i$ if it does not
arise any ambiguity) we mean the $i$th Betti number of $M$, i.e.,
$\dim_k\,\Tor_i^R(k,M)$. $P_M(t)=\sum_{i=0}^{\infty}\bt_i^R(M)t^i$
is the Poincar\'{e} series of $M$. For an integer $s$, the partial
sum of $P_M(t)$ is denoted by $P_M^{\leq
s}(t)=\sum_{i=0}^{s}\bt_i^R(M)t^i$. The $i$th Bass number of $M$,
i.e., $\dim_k\,\Ext^i_R(k,M)$ is denoted by $\mu^i_R(M)$ and
$\mu(M)$ is the minimal number of generators of $M$.
 The {\it {analytic spread}} of $I$,
denoted by $\ell(I)$, is the Krull dimension of
$\mathcal{R}(I)\otimes_R k$ , where $\mathcal{R}(I)$ is the Rees
algebra of $I$. One always has $\Ht(I)\leq\ell(I)\leq\min\{\dim
R,\mu(I)\}$, cf. \cite[1.89]{V}. Let $I=(i_1,\cdots,i_r)$ and
denote by $H_i(I)$ the $i$th homology of the Koszul complex with
respect to $i_1,\cdots,i_r$. We say that $I$ satisfies the {\it
sliding depth} condition (SD) if $\depth(H_i(I))\geq d-r+i, \;
i\geq0$; see \cite {HVV}, or \cite{HATez} for an alternate
definition. An ideal $I$
satisfies condition $G_{s+1}$ for some integer $s$, if $\mu(I_\fp)\leq\Ht \fp$ for all
prime ideal $I\subset \fp$ with $\Ht \fp\leq s$. Further, $I$
satisfies $G_{\infty}$ if $I$ satisfies $G_{s}$ for all $s$.
  $M$ is called {\it unmixed} if all
associated prime ideals of $M$ have a same height.  The {\it unmixed
part} of an ideal $I$, $I^{\text{unm}}$, is the intersection of
all primary components of $I$ with height equal to $\Ht(I)$. By a
perfect module we mean a CM module of finite projective dimension. Finally, we say that  $M$ has rank $r$
if $M\otimes_R Q$ is a free $Q$-module of rank $r$ where $Q$ is
the total ring of fractions of $R$.


\section{Small $\bt_1-\bt_0$}

The following proposition  provides one motivation for studying
modules with small $\bt_1-\bt_0$.

\begin{prop}\label{e:m1}
Let $I$ be an ideal of $R$. Then the followings hold:
\begin{enumerate}[\quad\rm(a)]
\item $\sum_{j=0}^{2n+1}(-1)^{j+1}\bt_{j}^R(I)\geq-1\;\text{for all
}n\geq0$; in particular $\bt_1^R(I)-\bt_0^R(I) \geq -1$.
\item If $R$ is unmixed and equality in (a) holds, then
$\grade(I)>0$.
\item If $\grade(I)>0$, then $\sum_{j=0}^{2n}(-1)^{j+1}\bt_{j}^R(I)\leq-1\;\text{for all
}n\geq0$.
\end{enumerate}
\end{prop}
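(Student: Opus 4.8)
The plan is to extract the alternating sums directly from a minimal free resolution of $I$ after localizing at a minimal prime, where all modules have finite length and Euler characteristics vanish on exact complexes. Fix a minimal prime $\fp$ of $R$, so that $R_\fp$ is Artinian local; every finitely generated $R$-module $M$ then has finite length $\ell(M):=\lambda_{R_\fp}(M_\fp)$, the assignment $\ell(-)$ is additive on short exact sequences, and $e:=\ell(R)>0$. Since $I_\fp\subseteq R_\fp$ we have $\ell(I)\leq e$, with equality exactly when $I_\fp=R_\fp$, that is, when $I\not\subseteq\fp$. Let
\[
\cdots\to R^{\bt_2}\xra{d_2}R^{\bt_1}\xra{d_1}R^{\bt_0}\xra{\epsilon}I\to0
\]
be the minimal free resolution of $I$, and for $m\geq1$ put $B_m=\Image d_m=\Ker d_{m-1}$ for the $m$th syzygy (writing $d_0=\epsilon$), so that the truncation $0\to B_m\to R^{\bt_{m-1}}\to\cdots\to R^{\bt_0}\to I\to0$ is exact. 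Because localization at $\fp$ is flat, each truncation localizes to a finite exact complex of finite-length $R_\fp$-modules.

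For part (a), I would truncate at $m=2n+2$ and apply additivity of $\ell$ to the localized exact sequence $0\to B_{2n+2}\to R^{\bt_{2n+1}}\to\cdots\to R^{\bt_0}\to I\to0$, which gives
\[
\ell(I)+e\sum_{j=0}^{2n+1}(-1)^{j+1}\bt_j-\ell(B_{2n+2})=0,
\]
so that $\sum_{j=0}^{2n+1}(-1)^{j+1}\bt_j=\bigl(\ell(B_{2n+2})-\ell(I)\bigr)/e$. Now $\ell(B_{2n+2})\geq0$ and $\ell(I)\leq e$ force this quantity to be $\geq -1$, which is (a). For part (c), the same computation with the truncation at $m=2n+1$ produces $\sum_{j=0}^{2n}(-1)^{j+1}\bt_j=\bigl(-\ell(I)-\ell(B_{2n+1})\bigr)/e$; here $\grade(I)>0$ guarantees a nonzerodivisor in $I$, hence $I\not\subseteq\fp$ and $\ell(I)=e$, so the right-hand side equals $-1-\ell(B_{2n+1})/e\leq -1$.

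For part (b), equality in (a) means $\bigl(\ell(B_{2n+2})-\ell(I)\bigr)/e=-1$, and since $\ell(B_{2n+2})\geq0$ while $\ell(I)\leq e$ this can only happen when $\ell(I)=e$, i.e. $I\not\subseteq\fp$. As the alternating sum is a fixed integer independent of the chosen minimal prime, this holds for every minimal prime of $R$; the unmixedness of $R$ then says $\Ass(R)$ consists only of minimal primes, so $I$ avoids all associated primes and, by prime avoidance, contains a nonzerodivisor, giving $\grade(I)>0$.

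The bookkeeping of signs in the Euler characteristic is routine; the two points that require care are the choice of ring over which to take lengths---localizing at a minimal prime is what makes every term finite and the alternating sum meaningful---and the passage in (b) from ``$I$ escapes every minimal prime'' to ``$I$ escapes every associated prime,'' which is exactly where the unmixed hypothesis is indispensable (an embedded prime could otherwise swallow $I$ and leave $\grade(I)=0$ while equality in (a) still held).
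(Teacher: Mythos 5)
Your argument is correct, and for parts (a) and (c) it is essentially the paper's own proof: localize the truncated minimal free resolution at a minimal prime, where everything has finite length, and read off the vanishing Euler characteristic together with the bounds $0\leq\ell(I_\fp)\leq\ell(R_\fp)$ (with equality on the right exactly when $I$ contains a nonzerodivisor). The only real divergence is in (b). The paper uses the equality to conclude that the syzygy $Z_{2n+1}$ vanishes at every minimal, hence (by unmixedness) every associated, prime, so $Z_{2n+1}=0$; this gives $\pd I<\infty$, hence that $I$ has a rank, and the equality forces $\rank(I)=1$, which yields $\grade(I)>0$. You instead note that equality forces $\ell(I_\fp)=\ell(R_\fp)$, i.e.\ $I\not\subseteq\fp$, for every minimal prime, and unmixedness plus prime avoidance then hands you a nonzerodivisor directly. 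Both uses of the unmixed hypothesis occur at the same spot ($\Ass(R)=\miin(R)$); your route is shorter and avoids the rank machinery, while the paper's route extracts the additional facts that $\pd I<\infty$ and $\rank(I)=1$, which is information it exploits elsewhere (e.g.\ in Lemma \ref{l:rank} and Proposition \ref{e:c1}). Either way the statement is fully proved.
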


\begin{proof}
\begin{enumerate}[\quad\rm(a)]
\item Let $\cdots \ra R^{\bt_1}\ra R^{\bt_0}\ra I\to 0$
be the minimal free resolution of $I$ and set
$Z_i=\ker(R^{\bt_i}\ra R^{\bt_{i-1}})$ for all $ i \geq 1$. By localizing the exact
sequence
$0\rightarrow Z_i\ra R^{\bt_i}\ra R^{\bt_{i-1}}\ra\cdots\ra I\to 0$
at the minimal prime $p$, we get the exact sequence
$$0\rightarrow (Z_i)_p\ra {R_p}^{\bt_i}\ra {R_p}^{\bt_{i-1}}\ra\cdots\ra IR_p\to 0$$
of $R_p$-modules.  $R_p$ is an Artinian local ring, hence each module in this exact sequence is of finite length.  We then have
$$length_{R_p}(IR_p)+\sum_{j=0}^i(-1)^{j+1}length_{R_p}({R_p}^{\bt_j})+(-1)^{i+2}length_{R_p}((Z_i)_p)=0.\qquad(\dag)$$

This equality in conjuction with  $length_{R_p}(R_p)\geq length_{R_p}(IR_p)$ yields
$$length_{R_p}(R_p)(1+\sum_{j=0}^i(-1)^{j+1}\bt_j)\geq-(-1)^{i+2}length_{R_p}((Z_i)_p).\qquad(\ddag)$$
Note that $length_{R_p}((Z_i)_p)\geq0$; so that  for  $i=2n+1$, an odd
integer, the desired inequality follows.

\item Assume that $R$ is unmixed and that $\sum_{j=0}^{2n+1}(-1)^{j+1}\bt_j=-1$ for some integer
$n\geq0$. Then it follows from ($\ddag$) that $(Z_{2n+1})_p=0$ for
all $p\in\Ass(R)$. Since $\Ass Z_i\subset\Ass R$ for all $i$,
$Z_{2n+1}=0$. Hence the projective dimension of $I$ is finite.
Thus $I$ has a rank and $\rank(I)=\sum_{j=0}^{2n+1}(-1)^j\bt_j=1$, therefore $\grade(I)>0$.

\item Since $\grade(I)>0$, there is an $x\in I$ which is
$R$-regular. Thus
$length_{R_p}(R_p)\geq length_{R_p}(IR_p)\geq length_{R_p}(xR_p)=length_{R_p}(R_p)$
for each $p\in\miin(R)$ (because $x\notin p$). This shows that
$length_{R_p}(IR_p)=length_{R_p}(R_p)$ for each $p\in\miin(R)$.
 employing this fact in ($\dag$), we get
$-(-1)^ilength_{R_p}((Z_i)_p)=length_{R_p}(R_p)(1+\sum_{j=0}^i(-1)^{j+1}\bt_j).$
 Hence for  $i=2n$ an even integer,
$\sum_{j=0}^i(-1)^{j+1}\bt_j\leq-1$.

\end{enumerate}
\end{proof}

The next theorem is our main theorem on the Cohen-Macaulayness of
residual intersections of ideals with small $\bt_1-\bt_0$, here we
do not care about the properties of the corresponding Koszul
complex.

\begin{thm}\label{e:m2}
Let $(R,\fm,k)$ be a Cohen-Macaulay Noetherian local ring, $I$ an
ideal of positive grade $g$ and with analytic spread $\ell$,
denote $\bt_i=\bt_i^R(I)$ for all $i$. Then
\begin{enumerate}[\quad\rm(a)]

\item if $\bt_1-\bt_0=-1$, then any $s$-residual intersection
of $I$ is perfect of projective dimension $s-1$;

\item if $\bt_1-\bt_0=-1$ and $R$ is Gorenstein, then  $I^{\text{unm}}$ is
perfect;

\item if $\bt_1-\bt_0=0$ and $I$ satisfies $G_{\ell+1}$ and $k$ is an infinite field,
 then $I$ admits a CM geometric $\ell$-residual intersection of  projective dimension $\ell$.

\end{enumerate}
\end{thm}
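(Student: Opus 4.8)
My plan is to read each hypothesis on $\bt_1-\bt_0$ off the minimal free resolution of $I$, using the boundary analysis already carried out in Proposition \ref{e:m1}, and then feed the resulting structure into residual-intersection machinery. Write the minimal resolution $\cdots\to R^{\bt_1}\xrightarrow{\varphi}R^{\bt_0}\to I\to 0$ and set $Z_1=\ker(R^{\bt_1}\to R^{\bt_0})$. For (a) and (b) the value $\bt_1-\bt_0=-1$ is precisely equality in Proposition \ref{e:m1}(a) at $n=0$; since $R$ is Cohen-Macaulay it is unmixed, so the argument proving \ref{e:m1}(b) applies verbatim and forces $Z_1=0$. Hence $\pd_R I\le 1$ and $\pd_R(R/I)\le 2$, which in turn bounds $g=\grade I=\Ht I\le 2$. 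For (c) the equality $\bt_1-\bt_0=0$ says $\bt_0=\bt_1$, so $\varphi$ is a \emph{square} presentation matrix, while \ref{e:m1}(c) at $n=1$ yields $\bt_2\ge 1$, i.e.\ $\pd_R I\ge 2$.

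For part (a), I split on $g$. If $g=2$, Auslander-Buchsbaum shows $R/I$ is Cohen-Macaulay, so $I$ is perfect of grade $2$; if $g=1$, Buchsbaum-Eisenbud acyclicity of the resolution of length $2$ gives $\grade I_{\bt_1}(\varphi)\ge 2$, so by Hilbert-Burch $I=a\,I'$ with $I'$ perfect of grade $2$ and $a$ a nonzerodivisor. In either case the resolution of $I$ has length at most $1$, and given an $s$-residual intersection $J=\fa:I$ one can resolve $R/J$ by a finite free complex of length $s$ assembled from the Koszul complex of the $s$ generators of $\fa$ together with this short resolution of $I$. I would prove its acyclicity by the Peskine-Szpiro acyclicity lemma; the shortness of the resolution ($\pd_R(R/I)\le 2$) is exactly what supplies the depth bounds the lemma needs, so no $G_s$ hypothesis enters. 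The complex has length $s$ and $\grade J\ge s$, whence $R/J$ is Cohen-Macaulay of codimension $s$ with $\pd_R(R/J)=s$, i.e.\ $J$ is perfect with $\pd_R J=s-1$.

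For part (b) I again split on $g$. When $g=2$, $R/I$ is Cohen-Macaulay, so $I=I^{\text{unm}}$ is perfect. When $g=1$, the factorization $I=a\,I'$ with $\Ht I'=2$ shows every height-one minimal prime $\fp$ of $I$ contains $a$ but not $I'$; localizing gives $I_\fp=aR_\fp$, so $I^{\text{unm}}=(a)$, a principal ideal generated by a nonzerodivisor, which is perfect. The Gorenstein hypothesis lets one phrase this uniformly through duality: $\Ext^1_R(R/I,R)\cong\om_{R/I^{\text{unm}}}$ is Cohen-Macaulay of dimension $d-1$ with $\pd_R\om_{R/I^{\text{unm}}}=1$, and dualizing back recovers $\pd_R(R/I^{\text{unm}})=1$. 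The only delicate point here is identifying this $\Ext$ module with the canonical module of the unmixed part and pinning down its projective dimension.

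Part (c) is the heart of the matter. Since $k$ is infinite, a minimal reduction of $I$ is generated by exactly $\ell=\ell(I)$ elements; choosing $\fa=(a_1,\dots,a_\ell)\subseteq I$ generically and invoking $G_{\ell+1}$ in the manner of Artin-Nagata produces a geometric $\ell$-residual intersection $J=\fa:I$ with $\Ht J\ge\ell$ and $\Ht(I+J)\ge\ell+1$. Writing $[a_1\cdots a_\ell]=[f_1\cdots f_{\bt_0}]\,C$ for a matrix $C$ of generic elements, I would resolve $R/J$ by a free complex of length $\ell$ built from the square presentation matrix $\varphi$ and $C$, and prove its acyclicity by the acyclicity lemma. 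The square shape $\bt_0=\bt_1$ is exactly what makes the ranks close up so that the complex has length $\ell$, and the genericity furnished by $G_{\ell+1}$ supplies the height estimates on the relevant Fitting ideals. The main obstacle, and the reason the sliding depth condition can be dropped, is verifying this acyclicity from the square-presentation structure and $G_{\ell+1}$ alone; once it is in hand, the length-$\ell$ resolution together with $\grade J\ge\ell$ gives $R/J$ Cohen-Macaulay of codimension $\ell$ with $\pd_R(R/J)=\ell$, as required.
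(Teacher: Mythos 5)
The decisive step of the paper's argument is missing from your proposal. In both (a) and (c) you say you will ``resolve $R/J$ by a finite free complex of length $s$ assembled from the Koszul complex of the generators of $\fa$ together with the short resolution of $I$'' and prove acyclicity by the Peskine--Szpiro lemma, with the depth bounds ``supplied by the shortness of the resolution.'' No such complex is exhibited, and this is not a detail: a mapping cone of the Koszul complex of $\fa$ onto the resolution of $I$ resolves (or merely presents) $I/\fa$, not $R/J$, and the acyclicity lemma requires depth estimates on the terms or homology that do not follow from $\pd_R(R/I)\leq 2$ --- supplying exactly such estimates is what the sliding depth hypothesis is for, and the point of the theorem is to avoid that route. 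What the paper actually does is different in kind: it forms the mapping cone presentation $R^{\bt_0+s-1}\xra{\psi}R^{\bt_0}\to I/\fa\to 0$, invokes Fitting's lemma to get $J=\Ann(I/\fa)\subset\sqrt{I_{\bt_0}(\psi)}$, uses $\grade J\geq s$ to conclude that $I_{\bt_0}(\psi)$ attains the generic grade $(\bt_0+s-1)-\bt_0+1=s$ and hence \emph{equals} $J$ (Eisenbud, Exercise 20.6), and then gets the length-$s$ resolution for free from the Eagon--Northcott complex and Cohen--Macaulayness from Hochster--Eagon. The identification of $J$ with a determinantal ideal of maximal grade is the whole content, and your outline contains no substitute for it. In part (c) you also have an off-by-one error: since $\Ht(\fa:I)\geq\ell+1$ and $J=I_{\bt_0}(\psi)$ for a $\bt_0\times(\bt_0+\ell)$ matrix, $R/J$ is perfect of grade $\ell+1$ with $\pd_R(R/J)=\ell+1$ (so $\pd_R J=\ell$, as the statement asserts); your claim that $R/J$ has codimension $\ell$ and $\pd_R(R/J)=\ell$ is inconsistent with $\Ht J\geq\ell+1$.

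Your treatment of (b) is a genuinely different and workable route: splitting on $g\in\{1,2\}$, using Hilbert--Burch to write $I=aI'$ when $g=1$ and identifying $I^{\text{unm}}=(a)$ directly, which bypasses the linkage argument $\A:J=I^{\text{unm}}$ that the paper uses and does not even need part (a). That is a nice simplification, but it does not rescue the proposal, since (a) and (c) --- the residual intersection statements --- are the parts whose proof is essentially absent.
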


\begin{proof}
\begin{enumerate}[\quad\rm(a)]
\item Let $\fa\subset I$ be an ideal generated by $s$ elements
such that $J=\fa:I$ is an $s$-residual intersection of $I$.  $I$ has the presentation $0\to R^{\bt_0-1}\to
R^{\bt_0}\to I\to0$. Let $\cdots\to R^s\to\fa\to0$ be a free
resolution for $\fa$. Lifting the inclusion $\fa\subset I$ and
computing the mapping cone of this lifting map, we have an exact
complex
\[
\cdots\to R^{\bt_0+s-1}\xra{\psi}R^{\bt_0}\xra{}I/\fa\to0
\]
By Fitting theorem \cite[Lemma 20.7]{E01}, $J={\rm
Ann}(I/\fa)\subset\sqrt{I_{\bt_0}(\psi)}$. Thus $\grade
I_{\bt_0}(\psi)\geq\grade(J)\geq s=(\bt_0+s-1)-\bt_0+1$ the
largest possible value. Therefore \cite[Excercise 20.6]{E01}
implies that $I_{\bt_0}(\psi)=J$. Now the Eagon-Northcott complex
of $\psi$ provides a free resolution for $R/I_{\bt_0}(\psi)$ of
length $s$. To see the Cohen-Macaulayness of $J$ we recall a
theorem of Hochster and Eagon which states that if a determinantal
ideal attains its maximum height in a Cohen-Macaulay local ring
then it is CM; see \cite[A2.55]{E02}.

\item In the case that $R$ is Gorenstein, for a maximal regular
sequence contained in $I$ say $\A$, one has
$\A:J=\A:\A:I=\A:\A:I^{\text{unm}}=I^{\text{unm}}$, so that
$I^{\text{unm}}$ is linked to $J=\A:I$; see \cite[1.1.8
(ii)]{HATez}. Hence, by theorems in the Linkage theory, the fact
that $J$ is perfect implies that $I^{\text{unm}}$ is perfect.

\item This part is based on the following consequence of three
results of Eisenbud, Huneke and Ulrich \cite[1.1, 1.2, 3.7]{EHU}.
According to these results, if $I$ is an ideal of positive grade in a Noetherian
local ring with infinite residue field such that $I$ satisfies
$G_{\ell+1}$, then there exists an ideal $\fa\subset I$ generated
by $\ell$ elements which is a minimal reduction of $I$ such that
$\Ht(\fa:I)\geq\mu(\fa)+1=\ell+1$, note that since $\fa$ is a
reduction of $I$, $\Ht(I)=\Ht(\fa)\leq\mu(\fa)<\ell+1$, thus
$J=\fa : I$ is a $\ell$-residual intersection of $I$. The rest of
the proof is similar to that of part (a) just note that the
beginning  of the mentioned mapping cone is of the form
\[
R^{\bt_0+\mu(\fa)}\xra{\psi}R^{\bt_0}\to I/\fa\to0.
\]
Therefore $$\Ht I_{\bt_0}(\psi)\geq\Ht J\geq
\mu(\fa)+1=(\bt_0+\mu(\fa)-\bt_0+1)(\bt_0-\bt_0+1)$$ the greatest
possible value. The result will now follow as part (a).
\end{enumerate}
\end{proof}

The next example shows the benefit of Theorem \ref{e:m2}, to prove
the Cohen-Macaulayness of residual intersections in cases where
other approaches  can not be applied. For example the sliding
depth is a condition which often appears in the proofs of
Cohen-Macaulayness of residual intersections; see \cite{U} for
instance. In the following we give an example of an (non CM) ideal
which admits a CM $3$-residual intersection but does not satisfy
the sliding depth condition.

\begin{example} Let $R=\mathbb{Q}[x,y,z]$
and $I=(x^2,xy,z^2)$. Then $\Ht(I)=2$ and the only minimal prime
containing $I$ is $(x,z)$. One now can see that $I$ satisfies
$G_\infty$. The minimal free resolution of $I$ is
$$0\ra R(-5)\ra R(-3)\oplus R^2(-4)\ra R^3(-2)\ra0.$$ That is $\bt_1^R(I)-\bt_0^R(I)=0$, hence $I$
fulfills the conditions of Theorem \ref{e:m2}(c). Proposition 1.5
of \cite{V} describes the equations of the Rees algebra
$\mathcal{R}_I$, say $P$. Using the following procedure in CoCoA,
we are able to compute $P$.\\
 \textbf{Define} \textbf{Equations}(I)\\
 S::=Q[t[1..4],x[1..3],u];\\
 \textbf{Using} S \textbf{Do}\\
I := \textbf{Ideal}(\textbf{BringIn}(\textbf{Gens}(I)));\\
G := \textbf{Gens}(I);\\
P := \textbf{Elim}(u, \textbf{Ideal}([t[N]-u*G[N] | N \textbf{In} 1..\textbf{Len}(G)]));\\
  \textbf{Minimalize}(P);
  \textbf{Return} P;
 \textbf{EndUsing};
\textbf{EndDefine};\\
I:= \textbf{Ideal}(x[1]x[1],x[3]x[3],x[1]x[2]);\\
 $P$:=\textbf{Equations}(I);\\
  $P=(-t[3]x[1] +t[1]x[2], -t[2]x[1]^2 + t[1]x[3]^2, t[2]x[1]x[2]
- t[3]x[3]^2)$\\
We then obtain that $\ell(I)=3$. Hence Theorem \ref{e:m2}(c)
ensures that there is a geometric $3$-residual intersection of $I$
which is CM. On the other hand $I$ does not satisfy SD. To see
this notice that the first homology of the Koszul complex of the
above generating set of $I$ is $((x^2,z^2):I)/I=(x,z^2)/I$ which
has depth zero since $(x,y,z)=I:xz$.

\end{example}

 The next  lemma will be  helpful in the sequel.

\begin{lem}\label{l:rank}
Let $M$ be a finitely generated $R$-module which has a rank. Then
for all $i\geq0$,
\[\bt_i^R(M)-\bt_{i+1}^R(M)\leq\sum_{j=1}^{i}(-1)^{j-1}\bt_{i-j}^R(M)+(-1)^i
\rank(M).
\]
The equality holds if and only if $\bt_{i+2}^R(M)=0$.
\end{lem}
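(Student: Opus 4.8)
The plan is to read the inequality off the ranks of the syzygy modules of a minimal free resolution of $M$, in the spirit of Proposition \ref{e:m1}. Write the minimal free resolution $\cdots\to R^{\bt_{i+1}}\xra{d_{i+1}}R^{\bt_i}\xra{d_i}\cdots\to R^{\bt_0}\to M\to0$ and let $\Omega_i=\Image(d_i)=\Ker(d_{i-1})$ denote the $i$-th syzygy module, with $\Omega_0=M$. First I would record the short exact sequences $0\to\Omega_{i+1}\to R^{\bt_i}\to\Omega_i\to0$ for all $i\geq0$. Since $M$ has a rank and rank is additive along short exact sequences — which one checks by localizing at each $\fp\in\Ass(R)$, where $R_\fp$ is Artinian local so that finitely generated projectives are free — an induction up the resolution shows that every $\Omega_i$ has a rank, and the recursion $\rank(\Omega_{i+1})=\bt_i-\rank(\Omega_i)$ with initial value $\rank(\Omega_0)=\rank(M)$ solves to
\[
\rank(\Omega_{i+1})=\bt_i-\sum_{j=1}^{i}(-1)^{j-1}\bt_{i-j}+(-1)^{i+1}\rank(M).
\]

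The inequality now comes from a second, independent estimate on the same rank. As $\Omega_{i+1}=\Image(d_{i+1})$ is a homomorphic image of $R^{\bt_{i+1}}$, localizing the surjection $R^{\bt_{i+1}}\twoheadrightarrow\Omega_{i+1}$ at the associated primes gives $\rank(\Omega_{i+1})\leq\bt_{i+1}$. Substituting the displayed formula into this bound and rearranging (using $-(-1)^{i+1}=(-1)^i$) produces exactly
\[
\bt_i-\bt_{i+1}\leq\sum_{j=1}^{i}(-1)^{j-1}\bt_{i-j}+(-1)^i\rank(M),
\]
which is the assertion of the lemma; the empty sum at $i=0$ recovers $\bt_0-\bt_1\leq\rank(M)$.

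For the equality statement I would trace back when the bound $\rank(\Omega_{i+1})\leq\bt_{i+1}$ is sharp. Feeding the short exact sequence $0\to\Omega_{i+2}\to R^{\bt_{i+1}}\to\Omega_{i+1}\to0$ into additivity of rank gives $\bt_{i+1}-\rank(\Omega_{i+1})=\rank(\Omega_{i+2})$, so equality holds if and only if $\rank(\Omega_{i+2})=0$. The last step is to upgrade this numerical condition to the structural one: being a syzygy, $\Omega_{i+2}$ is a submodule of the free module $R^{\bt_{i+1}}$, hence torsion-free, so the canonical map $\Omega_{i+2}\to\Omega_{i+2}\otimes_R Q$ is injective; since $\rank(\Omega_{i+2})=0$ means $\Omega_{i+2}\otimes_R Q=0$, we conclude $\Omega_{i+2}=0$. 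Finally $\Omega_{i+2}=\Image(d_{i+2})=\Ker(d_{i+1})$, and in a minimal resolution $\Omega_{i+2}=0$ forces $d_{i+2}=0$ and then, by Nakayama, $R^{\bt_{i+2}}=0$, i.e. $\bt_{i+2}=0$; the converse is immediate since $\bt_{i+2}=0$ makes $\Omega_{i+2}=0$. I expect the only delicate points to be the justification that all syzygies have ranks and that rank is additive (both handled by localizing at associated primes, where projective equals free), together with the torsion-free-plus-Nakayama argument needed to pass from $\rank(\Omega_{i+2})=0$ to $\bt_{i+2}=0$.
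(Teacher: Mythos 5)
Your argument is correct and follows essentially the same route as the paper: both compute $\rank$ of the syzygy modules by additivity along the short exact sequences $0\to\Omega_{i+1}\to R^{\bt_i}\to\Omega_i\to0$ and obtain the inequality from the nonnegativity of the rank of the next syzygy, with equality forcing that syzygy (hence $\bt_{i+2}$) to vanish. The only cosmetic differences are that you derive the alternating-sum rank formula by recursion where the paper cites \cite[Corollary 1.4.6]{BH}, and you spell out the torsion-free/Nakayama step that the paper leaves implicit.
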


\begin{proof}
Let $i\geq0$ be an integer. Consider a minimal free resolution
$\mathbf{F}_{\bullet}$ for $M$ and let $Z_{i-1}$ to be the
$(i-1)$th syzygy module of this complex. (Take the ($-1$)th syzygy
of $M$ to be $M$ itself.) By \cite[Corollary 1.4.6]{BH}, $Z_{i-1}$
has a rank and
$$\rank(Z_{i-1})=\sum_{j=1}^{i}(-1)^{j-1}\bt_{i-j}^R(M)+(-1)^i\rank(M).$$
Now, the exact sequence $0\to Z_i\to R^{\bt_i}\to Z_{i-1}\to0$
implies that $Z_i$, as well, has a rank and
$\rank(Z_{i-1})+\rank(Z_{i})=\bt_i^R(M)$. Hence using the similar
fact that $\rank(Z_{i})=\bt_{i+1}^R(M)-\rank(Z_{i+1})$,  one gets
that
\begin{align*}
\begin{split}
\bt_i^R(M)-\bt_{i+1}^R(M)&=\rank(Z_{i-1})-\rank(Z_{i+1})\\&\leq\rank(Z_{i-1})=\sum_{j=1}^{i}(-1)^{j-1}\bt_{i-j}^R(M)+(-1)^i\rank(M).
\end{split}
\end{align*}
Clearly the equality holds if and only if $\rank(Z_{i+1})=0$ which
in turn implies that $Z_{i+1}=0$. i.e., $\bt_{i+2}^R(M)=0$.

\end{proof}
Employing the techniques  in the proof of  Theorem \ref{e:m2}, we
have the following properties of ideals with small $\bt_1-\bt_0$.

\begin{prop}\label{e:c1}
Let $I$ be an ideal of a Gorenstein local ring $R$ with
$\bt_1^R(I)-\bt_0^R(I)=-1$ and $\bt_1^R(I)\neq0$. Then $\pd (I)=1$
and moreover either $I$ is a perfect ideal of $\grade$ 2 or $I$ is
not unmixed and its grade is 1.
\end{prop}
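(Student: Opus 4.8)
The plan is to first establish that $\pd(I)=1$, and then to analyze the two possible structural outcomes depending on whether $I$ is unmixed. The hypothesis $\bt_1-\bt_0=-1$ with $\bt_1\neq 0$ is exactly the boundary case of Proposition \ref{e:m1}(a), so I would begin by invoking that result. Since the equality $\bt_1^R(I)-\bt_0^R(I)=-1$ is the equality case of (a) with $n=0$, and a Gorenstein local ring is in particular unmixed, Proposition \ref{e:m1}(b) immediately gives $\grade(I)>0$ and, crucially, its proof shows that $I$ has finite projective dimension with $\rank(I)=1$. To pin down the projective dimension exactly, I would apply Lemma \ref{l:rank} with $i=1$ and $M=I$: this yields $\bt_1-\bt_2\leq \bt_0-\rank(I)=\bt_0-1=\bt_1$, so $\bt_2\geq 0$ with equality forced precisely when $\bt_3=0$. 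The more direct route is to use the equality clause of the lemma at $i=1$, combined with the observation that the syzygy computation already shows $\rank(Z_1)=\bt_0-\rank(I)-\rank(Z_0)=\bt_0-1-(\bt_0-1)=0$ using $\rank(Z_0)=\bt_0-\rank(I)$. Because $Z_1$ has rank $0$ but is a submodule of a free module (hence torsion-free, hence a submodule of its rank times $Q$, forcing $Z_1=0$), the resolution terminates and $\pd(I)=1$.

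Having fixed $\pd(I)=1$, I would split into the two cases according to the depth of $I$. By the Auslander--Buchsbaum formula applied to $R/I$ (noting $\pd(R/I)=\pd(I)+1=2$), one has $\depth(R/I)=\depth R-2=d-2$. Now $\grade(I)$ equals the length of a maximal $R$-regular sequence in $I$, which for the Gorenstein (hence Cohen--Macaulay) ring $R$ satisfies $\grade(I)=\Ht(I)$ when $I$ is unmixed. The key dichotomy is whether $\grade(I)=2$ or $\grade(I)=1$. If $\grade(I)=2$, then $\pd(R/I)=2=\grade(I)$ means $R/I$ is perfect, so $I$ is a perfect ideal of grade $2$; this is the first alternative. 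If instead $\grade(I)=1$, I must show $I$ fails to be unmixed, which is the second alternative.

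For the case $\grade(I)=1$, the argument that $I$ is not unmixed is where I expect the main obstacle to lie. The idea is that if $I$ were unmixed of height $1$ in a Gorenstein ring, then since $\pd(I)=1$ and $R$ is Cohen--Macaulay, the perfectness criterion $\grade(I)=\pd(R/I)$ would fail, because $\pd(R/I)=2>1=\grade(I)$. More precisely, an unmixed ideal $I$ with $\Ht(I)=1$ in a Cohen--Macaulay ring satisfies $\grade(I)=1$, and if in addition $I$ were such that $R/I$ is Cohen--Macaulay, then $\pd(R/I)=\Ht(I)=1$ by Auslander--Buchsbaum, contradicting $\pd(R/I)=2$. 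So the content is that when $\grade(I)=1$ and $\pd(I)=1$, the quotient $R/I$ is \emph{not} Cohen--Macaulay, which forces the existence of an embedded prime, i.e., $I$ is not unmixed. I would make this rigorous by comparing $\depth(R/I)=d-2$ with $\dim(R/I)=d-1$ (the latter from $\Ht(I)=1$): the strict inequality $\depth(R/I)<\dim(R/I)$ shows $R/I$ is not Cohen--Macaulay, and a non-Cohen--Macaulay quotient of height $1$ in a catenary equidimensional ring must have an associated prime of height greater than $1$, witnessing that $I$ is not unmixed. The delicate point to verify carefully is that $\Ht(I)=1$ exactly when $\grade(I)=1$ here, and that the depth computation correctly separates the two cases; I would lean on the Auslander--Buchsbaum formula and the unmixedness characterization via associated primes to close this gap.
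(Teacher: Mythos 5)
Your first half is sound and matches the paper: the equality case of Lemma \ref{l:rank} at $i=0$ (equivalently, your syzygy-rank computation giving $\rank(Z_1)=0$, hence $Z_1=0$ since it sits inside a free module) yields $\pd(I)=1$, and the dichotomy $\grade(I)\in\{1,2\}$ follows from $0<\grade(I)\leq\pd(R/I)=2$. The grade-$2$ case is also fine: $\depth(R/I)=d-2=\dim(R/I)$ gives perfection.

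The genuine gap is the grade-$1$ case. Your key step is: ``$R/I$ is not Cohen--Macaulay, and a non-Cohen--Macaulay quotient of height $1$ in a catenary equidimensional ring must have an associated prime of height greater than $1$.'' This implication is false as stated: non-Cohen--Macaulayness does not force embedded or high-height associated primes (any non-CM local domain is a quotient of a regular local ring by a prime, hence unmixed, ideal). Nothing in your argument converts the depth inequality $\depth(R/I)=d-2<d-1=\dim(R/I)$ into a statement about $\Ass(R/I)$, and you correctly flag this as the unverified obstacle. What actually closes the case --- and what the paper does --- is structural: since $\pd(I)=1$, the Hilbert--Burch theorem gives $I=aI_{\bt_0-1}(\vf)$ with $I_{\bt_0-1}(\vf)$ perfect of grade $2$, so $\grade(I)=1$ exactly when $a$ is a non-unit; then for a nonzerodivisor $\A\in I$ the colon ideal $\A:I$ is generated by the determinant of a square matrix (the mapping-cone argument from Theorem \ref{e:m2}(a)), say $(\G)$, and linkage gives $I^{\text{unm}}=\A:(\A:I)=\A:(\G)=(\Dt)$ where $\A=\G\Dt$. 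Thus $I^{\text{unm}}$ is principal, of projective dimension $0$, while $\pd(I)=1$, forcing $I\neq I^{\text{unm}}$. Some such finite-projective-dimension input is indispensable: unmixed non-principal height-one ideals with non-CM quotients certainly exist in Gorenstein rings, so your purely depth-theoretic route cannot succeed without it.
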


\begin{proof}
Let $\bt_i=\bt_i^R(I)$. Since the equality in Lemma \ref{l:rank}
holds, we have $\bt_2=0$, that is $\pd\,I=1$. Now consider the
presentation $0\ra R^{{\bt_0}-1}\xra{\vf}{R^{\bt_0}}\ra I\ra0$. By
Hilbert-Burch theorem \cite[Theorem 1.4.7]{BH},
$I=aI_{{\bt_0}-1}(\vf)$ for some non-zero divisor $a$ of $R$, and
$I_{{\bt_0}-1}(\vf)$ is a perfect ideal of grade $2$. There are two
cases, either $a$ is unit or not. In the former case
$I=I_{{\bt_0}-1}(\vf)$ is a perfect ideal of grade $2$, whereas in
the latter one,  $\grade(I)=\grade(aI_{{\bt_0}-1}(\vf))
=\min\{\grade(Ra),\grade(I_{{\bt_0}-1}(\vf))\}=1$. Let $\A\in I$
be a non-zero divisor. Using the proof of Theorem \ref{e:m2}(a)
one can see that $(\A:I)$ is a determinant of a square matrix.
Thus it is a principle ideal say $(\G)$. By \cite[1.8
(ii)]{HATez}, $(\A:\G)=I^{\text{unm}}$. We note that $\A=\G\Dt$
for some non-zero divisor $\Dt$ of $R$, that is
$(\Dt)=I^{\text{unm}}$ which is an ideal of projective dimension
zero. In particular, $I\neq I^{\text{unm}}$.
\end{proof}

\begin{prop}
Let $I$ be an ideal of a Gorenstein local ring $R$ which satisfies
one of the following conditions:
\begin{enumerate}[\quad\rm(a)]
\item
$\bt_1^R(I)-\bt_0^R(I)=-1$ and $\grade(I)=1$ or
\item
$\bt_1^R(I)-\bt_0^R(I)=0$ and $\grade(I)=0$.
\end{enumerate}
Then $\om_{R/I}\cong R/I^{\text{unm}}$.
\end{prop}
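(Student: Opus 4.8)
The plan is to reduce $\om_{R/I}\cong R/I^{\text{unm}}$ to a statement about $R/I^{\text{unm}}$ alone. Write $d=\Dim R$ and $g=\grade(I)$; since $R$ is Gorenstein (hence Cohen--Macaulay), $g=\Ht(I)$ and $\Dim(R/I)=d-g$, so the canonical module is computed as $\om_{R/I}=\Ext^g_R(R/I,R)$. First I would show $\om_{R/I}\cong\om_{R/I^{\text{unm}}}$. From the exact sequence $0\to N\to R/I\to R/I^{\text{unm}}\to 0$ with $N=I^{\text{unm}}/I$, note that $N_\fp=0$ at every height-$g$ minimal prime $\fp$ of $I$ (there $(I^{\text{unm}})_\fp=I_\fp$); hence $\Dim(N)<d-g$ and $\grade(N)=d-\Dim(N)\ge g+1$. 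Thus $\Ext^{g-1}_R(N,R)=\Ext^g_R(N,R)=0$, and the long exact sequence of $\Ext^\bullet_R(-,R)$ gives $\Ext^g_R(R/I^{\text{unm}},R)\cong\Ext^g_R(R/I,R)=\om_{R/I}$; as $I^{\text{unm}}$ is unmixed of grade $g$, the left-hand side is exactly $\om_{R/I^{\text{unm}}}$.

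It now suffices to prove that $R/I^{\text{unm}}$ is (quasi-)Gorenstein, i.e. that its canonical module is cyclic. In case (a) this is immediate: if $\bt_1=0$ then $I=(a)$ with $a$ a nonzerodivisor and $I=I^{\text{unm}}$, while if $\bt_1\neq0$ Proposition \ref{e:c1} gives $I^{\text{unm}}=(\Dt)$ with $\Dt$ a nonzerodivisor; either way $R/I^{\text{unm}}=R/(\Dt)$ is Gorenstein, so $\om_{R/I^{\text{unm}}}\cong R/I^{\text{unm}}$ and we conclude by the reduction above.

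In case (b), $g=0$, so $\om_{R/I}=\Hom_R(R/I,R)=\Ann(I)=:J$ and, linking through $\fa=0$, $I^{\text{unm}}=0:(0:I)=\Ann(J)$ \cite[1.8(ii)]{HATez}; since $J\cdot I^{\text{unm}}=J\cdot\Ann(J)=0$ and $I\subseteq I^{\text{unm}}$, one gets $\Ann(I^{\text{unm}})=J$, so $\om_{R/I^{\text{unm}}}=J$ as well. Everything thus reduces to showing that $J=\Ann(I)$ is a \emph{principal} ideal: if $J=(\G)$ then $J\cong R/\Ann(\G)=R/\Ann(J)=R/I^{\text{unm}}$. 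The hypothesis $\bt_1-\bt_0=0$ makes the first syzygy matrix $\vf$ of $I$ square, and I would try to argue, in the spirit of the proof of Theorem \ref{e:m2}(a) and Proposition \ref{e:c1}, that $\Ann(I)=I_{\bt_0}(\vf)$ is generated by its single maximal minor $\det\vf$.

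The main obstacle is exactly this cyclicity. In Theorem \ref{e:m2}(a) the coincidence of the annihilator with the determinantal Fitting ideal rested on a grade hypothesis forcing the maximal-minor ideal to attain its generic grade; in the grade-$0$ case this degenerates, since $\det\vf$ is a zerodivisor as soon as $I\neq0$, so \cite[Exercise 20.6]{E01} no longer applies directly and $\Ann(I)=(\det\vf)$ is not automatic. What must really be shown is that the type of $R/I^{\text{unm}}$ equals one, i.e. $\mu(\om_{R/I})=1$. I expect to obtain this by a length-and-duality computation over the Gorenstein ring---exploiting the self-duality carried by the square matrix $\vf$ and, after reducing to the Artinian case, Matlis duality (under which $\mu(\Ann(I))=\dim_k\mathrm{soc}(R/I)$)---and this is the technical heart of the argument.
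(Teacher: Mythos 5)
Your case (a) is correct and complete, and it takes a genuinely different (slightly longer) route than the paper: you first prove $\om_{R/I}\cong\om_{R/I^{\text{unm}}}$ via the vanishing of $\Ext^{g-1}_R(N,R)$ and $\Ext^{g}_R(N,R)$ for $N=I^{\text{unm}}/I$, and then quote the fact, established inside the proof of Proposition \ref{e:c1}, that $I^{\text{unm}}=(\Dt)$ for a nonzerodivisor $\Dt$, so that $R/I^{\text{unm}}$ is Gorenstein. The paper instead computes $\om_{R/I}$ directly by linkage, $\om_{R/I}=(\A:I)/(\A)=(\G)/(\A)\cong R/((\A):(\G))=R/(\A:(\A:I))=R/I^{\text{unm}}$, using that $\A:I=(\G)$ is the maximal-minor ideal of the square matrix coming from the mapping cone in Theorem \ref{e:m2}(a); there $\grade(\A:I)\geq1$, so \cite[Exercise 20.6]{E01} does apply. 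Both arguments rest on the same determinantal input, and your reduction buys a cleaner conceptual statement ($R/I^{\text{unm}}$ is a hypersurface section, hence Gorenstein) at the cost of the extra $\Ext$ comparison.

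The gap is in case (b), and you have located it yourself without closing it: after your (correct) reductions, everything comes down to showing that $0:I$ is principal, and the proposal stops at ``I expect to obtain this by a length-and-duality computation.'' That expectation is not a proof; the type-one statement $\mu(\om_{R/I})=1$ \emph{is} the content of case (b), so leaving it as the ``technical heart'' means this case is not established. For comparison, the paper disposes of the step by running the mechanism of Theorem \ref{e:m2}(a) with $\fa=0$: the hypothesis $\bt_1=\bt_0$ makes the minimal presentation $R^{\bt_0}\xra{\vf}R^{\bt_0}\to I\to0$ square, and the claim is $0:I=I_{\bt_0}(\vf)=(\det\vf)$. Your observation that the grade hypothesis of \cite[Exercise 20.6]{E01} degenerates here (every element of $0:I$ kills the nonzero ideal $I$, so $\grade(I_{\bt_0}(\vf))=0$, short of the required value $1$) is a fair criticism of that citation, but it does not substitute for the missing argument. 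To complete the write-up you must actually prove that $\Ann(I)$ is cyclic under the hypotheses $\bt_1=\bt_0$ and $\grade(I)=0$ --- whether by the Matlis-duality computation you sketch or by a direct analysis of $\Ann(\coker\vf)$ when the resolution continues minimally; as it stands the proposal proves (a) but only reduces (b) to an unproved assertion.
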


\begin{proof}
For (a), by the same token as Corollary \ref{e:c1}, for a regular
element $\A$ in $I$, we have
$$\om_{R/I}=\frac{\A:I}{(\A)}=\frac{(\G)}{(\A)}\cong\frac{R}{0:_R(\G/\A)}=\frac{R}{(\A):(\G)}=\frac{R}{\A:(\A:I)}=\frac{R}{I^{\text{unm}}}.$$
For (b) according to the proof of Theorem \ref{e:m2}(a), one
obtains that $(0:I)$ is principle say $(\G)$. Hence
$\om_{R/I}=0:I=(\G)=R/(0:\G)=R/I^{\text{unm}}$.
\end{proof}


\section{Betti numbers of the canonical module}
This section is mostly devoted to study the behavior of the Betti
sequence of the canonical module of a Cohen-Macaulay ring $R/J$ ,
where $R$ is a Gorenstein ring,  specially those in which
$\grade(J)$ is small.

\begin{thm}\label{e:m3}
Let $(R,\fm,k)$ be a CM local ring of dimension $d$ and $J$ an
ideal of $R$ of height g. Let $\A$ be a maximal regular sequence
in $J$ and $I=\A:J$. Then

\begin{enumerate}[\quad\rm(a)]

\item $\bt_i^R(\om_{R/J})=\bt_{i+1}^R(R/I)$ for all $i\geq g+1$.

\item $\P_{\om_{R/J}}^{\leq g}(-1)=(-1)^g\bt_{g+1}^R(R/I)-\P_{R/I}^{\leq
g}(-1)$.

\item If $R$ is Gorenstein and $J$ is Cohen-Mcaulay, then $\bt_i^R(\om_{R/J})=\mu^R_{d-g+i}(R/J)$ for all $i\geq0$.

\item $\bt_0^R(\om_{R/J})=\mu_{d-g}^R(R/J)\mu_R^d(R)$.

\end{enumerate}
\end{thm}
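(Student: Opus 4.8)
The goal is to prove Theorem~\ref{e:m3}, whose four parts relate Betti numbers of the canonical module $\om_{R/J}$ to Betti and Bass numbers of $R/I$ and $R/J$, where $I=\A:J$ and $\A$ is a maximal regular sequence in $J$. The unifying idea of the plan is \emph{linkage}: since $\A$ is a regular sequence of length $g=\Ht J$ inside $J$, passing to the quotient $\bar R=R/(\A)$ turns $J$ and $I$ into linked ideals in a zero-dimensional-codimension sense, and the canonical module can be computed via this linkage. The plan is to exploit the standard fact that $\om_{R/J}\cong (\A:J)/(\A)=I/(\A)$ when $R$ is Gorenstein, or more generally to use the change-of-rings that regular sequences induce on Tor and Ext.

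First I would set up the linkage machinery. Writing $\bar R=R/(\A)$, the sequence $\A$ being $R$-regular and contained in $J$ gives $\om_{R/J}\cong \Hom_{\bar R}(\bar R/J\bar R,\bar R)\cong (0:_{\bar R}J\bar R)$, and this identifies with $I/(\A)$. Crucially, because $\A$ is a regular sequence, tensoring a minimal free resolution of $R/I$ over $R$ with $k$ is insensitive to $\A$ in high homological degree: the Koszul complex on $\A$ contributes only in degrees $0$ through $g$. This is precisely the mechanism behind part~(a): for $i\ge g+1$ the comparison between the resolution of $R/I$ over $R$ and the resolution of $I/(\A)\cong\om_{R/J}$ stabilizes, because the short exact sequence $0\to I/(\A)\to R/(\A)\to R/I\to 0$ yields $\bt_i^R(\om_{R/J})\cong\bt_{i+1}^R(R/I)$ once the $R/(\A)$ term has exhausted its Betti contributions (which live in degrees $\le g$, by the Koszul resolution). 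So the plan for~(a) is to write this short exact sequence, take the long exact sequence in $\Tor^R(k,-)$, and observe that $\Tor_i^R(k,R/(\A))=0$ for $i>g$.

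Part~(b) should then fall out by the same short exact sequence, this time tracking the \emph{low-degree} terms through the alternating sum at $t=-1$. Evaluating Poincar\'e series at $-1$ converts $\bt_i$ into an Euler-characteristic-style alternating sum, and the term $R/(\A)$ contributes $\P^{\le g}$ of the Koszul complex, namely $(1-t)^g$ evaluated suitably; combined with the degree-shift coming from $\om_{R/J}$ sitting as the kernel, one collects the partial sums $\P_{\om_{R/J}}^{\le g}(-1)$ and $\P_{R/I}^{\le g}(-1)$ together with the boundary term $(-1)^g\bt_{g+1}^R(R/I)$ that separates the stable range of~(a) from the unstable range. For part~(c), with $R$ Gorenstein and $R/J$ Cohen--Macaulay, I would invoke local duality: $\om_{R/J}$ is then a maximal Cohen--Macaulay $R/J$-module, and the Bass numbers of $R/J$ are dual to the Betti numbers of $\om_{R/J}$ via $\Ext^i_R(k,R/J)\cong\Tor$-type duality shifted by the codimension $d-g$; concretely $\mu^R_{d-g+i}(R/J)=\dim_k\Ext^{d-g+i}_R(k,R/J)$ matches $\bt_i^R(\om_{R/J})$ because applying $\Hom_R(-,R)$ to a minimal $R$-free resolution of $R/J$ produces, up to the shift $d-g=\pd_R(R/J)$, a minimal resolution of $\om_{R/J}$. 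Part~(d) is the $i=0$ specialization combined with the observation $\bt_0^R(\om_{R/J})=\mu(\om_{R/J})$ and the Gorenstein-type identity $\mu^d_R(R)$ accounting for the type of $R$; it should be read off from~(c) together with the fact that the zeroth Betti number counts minimal generators.

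The main obstacle I anticipate is \emph{part~(b)}: the bookkeeping of signs and partial sums when evaluating at $t=-1$ is delicate, because the short exact sequence relating $\om_{R/J}$, $R/(\A)$, and $R/I$ does not split, so one must argue that the connecting maps in the long exact $\Tor$ sequence are handled correctly and that no cancellation is lost in the transition from the stable range (degrees $\ge g+1$, governed by~(a)) to the unstable range (degrees $\le g$). Establishing that the boundary contribution is exactly $(-1)^g\bt_{g+1}^R(R/I)$ and not an accumulated sum requires carefully isolating the single degree $g+1$ where~(a) first applies. The other parts are comparatively mechanical once the linkage isomorphism $\om_{R/J}\cong I/(\A)$ and the change-of-rings over the regular sequence $\A$ are in hand.
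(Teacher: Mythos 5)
Your treatment of parts (a) and (b) coincides with the paper's: identify $\om_{R/J}\cong(\A:J)/(\A)=I/(\A)$, form the short exact sequence $0\to\om_{R/J}\to R/(\A)\to R/I\to0$, and run the long exact sequence of $\Tor^R(k,-)$, using that the Koszul complex on $\A$ bounds the Betti numbers of $R/(\A)$ by degree $g$; for (b) one counts dimensions in the truncated long exact sequence and uses $\sum_i(-1)^i\binom{g}{i}=0$. That portion is correct.

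Part (c) contains a genuine gap. You propose to apply $\Hom_R(-,R)$ to a minimal free resolution of $R/J$ ``of length $d-g=\pd_R(R/J)$'' and read off a minimal resolution of $\om_{R/J}$. First, if $\pd_R(R/J)$ is finite it equals $g$, not $d-g$, by Auslander--Buchsbaum. Second, and more seriously, $R$ is only assumed Gorenstein, not regular, so $\pd_R(R/J)$ is typically infinite and there is no finite resolution to dualize; yet this is exactly the case where the statement has content, since both $\bt_i^R(\om_{R/J})$ and $\mu_R^{d-g+i}(R/J)$ are then nonzero for infinitely many $i$. Third, even in the perfect case the dualized resolution yields the Betti-to-Betti relation $\bt_i^R(\om_{R/J})=\bt_{g-i}^R(R/J)$, which is not the claimed Betti-to-Bass relation $\bt_i^R(\om_{R/J})=\mu_R^{d-g+i}(R/J)$; you would still need a separate duality converting Betti numbers into Bass numbers, which you only gesture at. The paper instead runs the two spectral sequences of the double complex $\P_\bullet\otimes_R\Hom_R(R/J,\E^\bullet)$ ($\P_\bullet$ a minimal free resolution of $k$, $\E^\bullet$ the finite injective resolution of $R$), obtaining $\Tor_i^R(k,\om_{R/J})\cong\Ext^d_R(\Ext^{d-g+i}_R(k,R/J),R)$; this works with no finiteness of $\pd_R(R/J)$. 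Finally, (d) is not simply the $i=0$ case of (c): it is asserted without the Gorenstein and Cohen--Macaulay hypotheses, so the factor $\mu_R^d(R)$ (the type of $R$) genuinely appears; the paper obtains it from the surviving corner term $^{2}\E_{\text{hor}}^{-(d-g),0}$ of the same spectral sequence, which needs only that $\Ext^{j}_R(R/J,R)=0$ for $j<g$.
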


\begin{proof}
\begin{enumerate}[\quad\rm(a)]

\item Considering the assumptions, we have
$I/\A=(\A:J)/\A=\Hom_R(R/J,R/\A)=\Ext^g_R(R/J,R)\cong\om_{R/J}$.
Hence we obtain the following exact sequence:
$$0\ra\om_{R/J}\ra R/\A\ra R/I\ra0$$
The long exact sequence of $\Tor$ yields that
$\Tor_{g+i+1}^R(R/\fm,R/I)\cong\Tor_{g+i}^R(R/\fm,\om_{R/J})$ for
all $i\geq1$ which finishes the proof of (a).

\item
 We consider the beginning terms of the long $\Tor$
exact sequence:
$$0\ra\Tor_{g+1}^R(k,R/I)\ra\Tor_{g}^R(k,\om_{R/J})\ra\Tor_{g}^R(k,R/\A)\ra\cdots\ra\Tor_0^R(k,R/I)\ra0.$$
Counting the vector space dimensions, we have
$$\bt_{g+1}^R(R/I)-\sum_{i=0}^{g}(-1)^i\bt_{g-i}^R(\om_{R/J})+\sum_{i=0}^{g}(-1)^i\bt_{g-i}^R(R/\A)-\sum_{i=0}^{g}(-1)^i\bt_{g-i}^R(R/I)=0.$$
The fact that
$\sum_{i=0}^{g}(-1)^i\bt_{g-i}^R(R/\A)=\sum_{i=0}^{g}(-1)^i\binom{g}{g-i}=0$
implies that $$\bt_{g+1}^R(R/I)-(-1)^g\P_{\om_{R/J}}^{\leq
g}(-1)-(-1)^g\P_{R/I}^{\leq g}(-1)=0$$ which is the assertion of
(b).

\item We construct the following spectral sequence of
Foxby. Consider the finite injective resolution of $R$,
$\mathbf{E}^\bullet:0\ra\E^0\ra\cdots\ra\E^d\ra0$ and the
projective resolution of $k$,
$\mathbf{P}_\bullet:\cdots\ra\P_1\ra\P_0\ra0$. The double complex
$\P_\bullet\ot_R\Hom_R(R/J,\E^\cdot)$ in the second quadrant is
pictured in the following:

$$\xymatrix{
        &\ar[d]\vdots                               & \ar[d]\vdots                               &\ar[d]\vdots                            &\\
0\ar[r] &P_1\ot_R\Hom_R(R/J,\E^0) \ar[d]\ar[r]\cdots&  P_1\ot_R\Hom_R(R/J,\E^{d-1}) \ar[d]\ar[r] & P_1\ot_R\Hom_R(R/J,\E^d)\ar[d] \ar[r] & {0} \\
0\ar[r] &P_0\ot_R\Hom_R(R/J,\E^{0})\ar[d]\ar[r]\cdots
&P_0\ot_R\Hom_R(R/J,\E^{d-1})\ar[d] \ar[r] &
P_0\ot_R\Hom_R(R/J,\E^d)\ar[d] \ar[r] & 0 \\
       &0                                           &0                                            &0& }$$
 The terms of the second horizontal spectral sequence arisen from
this double complex is as the following diagram,
$^{2}\E_{\text{hor}}^{-i,j}=\Tor_j^R(k,\Ext^{d-i}_R(R/J,R))$:

$$\xymatrix{
\vdots&\cdots&\vdots&\vdots\\
\Tor_2^R(k,\Ext^g_R(R/J,R)) & \cdots  & \Tor_2^R(k,\Ext^{d-1}_R(R/J,R)) & \Tor_2^R(k,\Ext^d_R(R/J,R))\ar[ddl] \\
\Tor_1^R(k,\Ext^g_R(R/J,R)) & \cdots  & \Tor_1^R(k,\Ext^{d-1}_R(R/J,R)) & \Tor_1^R(k,\Ext^d_R(R/J,R)) \\
\Tor_0^R(k,\Ext^g_R(R/J,R)) & \cdots
 &\Tor_0^R(k,\Ext^{d-1}_R(R/J,R))& \Tor_0^R(k,\Ext^d_R(R/J,R)) }$$

The fact that $R$ is Gorenstein and $R/J$ is Cohen-Macaulay of
dimension $d-g$ implies that $\Ext^{d-i}_R(R/J,R)=0$ for all
$i\neq d-g$. Thus this spectral sequence has infinite terms
$^{\infty}\E_{\text{hor}}^{-i,j}=0$ for $i\neq d-g$ and
$^{\infty}\E_{\text{hor}}^{-(d-g),j}=^{2}\E_{\text{hor}}^{-(d-g),j}$.

On the other hand the functorial isomorphism
$M\ot_R\Hom_R(N,I)\cong\Hom_R(\Hom_R(M,N),I)$, for a finitely
generated $R$-module $M$ and an injective $R$-module $I$ implies
the second vertical spectral sequence to be of the following form:

$$\xymatrix{
\vdots&\vdots&\vdots\\
\Ext^{d-2}_R(\Ext^1_R(k,R/J),R)&\Ext^{d-1}_R(\Ext^1_R(k,R/J),R) & \Ext^d_R(\Ext^1_R(k,R/J),R) \\
\Ext^{d-2}_R(\Ext^0_R(k,R/J),R)\ar[rru]&\Ext^{d-1}_R(\Ext^0_R(k,R/J),R)&\Ext^d_R(\Ext^0_R(k,R/J),R)
}$$

We now notice that $\dim\Ext^i_R(k,R/J)=0$ for all $i$, in
particular $\depth\Ext^i_R(k,R/J)=0$ for all $i$ and that
$\Ext^j_R(\Ext^i_R(k,R/J),R)=0$ for all $j\neq d$. Thus
$$
^{\infty}\E_{\text{ver}}^{-i,j}=
\left\{%
\begin{array}{ll}
    0, & \hbox{$i\neq0$;} \\
    \Ext^d_R(\Ext^j_R(k,R/J),R), & \hbox{$i=0$.} \\
\end{array}%
\right.
$$
Now the convergence of these two spectral sequences imply that
$$\Tor_i^R(k,\om_{R/J})\cong\Ext^d_R(\Ext^{d-g+i}_R(k,R/J),R)\;\text{for
all}\; i\geq0.$$
Accordingly,
$\bt_i^R(\om_{R/J})=\mu_R^{d-g+i}(R/J)\mu_R^d(R)=\mu_R^{d-g+i}(R/J)$
for all $i\geq0$.

\item To see this part notice that in $^{2}\E_{\text{hor}}$,
$^{2}\E_{\text{hor}}^{-g,0}$ is located in the right-down non-zero
corner of the diagram of $^{2}\E_{\text{hor}}$ (recall that
$\Ext^{d-i}_R(R/J,R)=0$ for all $i\neq d-g$); hence
$^{2}\E_{\text{hor}}^{-g,0}=$ $^{\infty}\E_{\text{hor}}^{-g,0}$.
Then the convergence of the spectral sequence implies that
$$\Tor_0^R(k,\Ext^{d-g}_R(R/J,R))\cong\Tor_0^R(k,\om_{R/J})\cong\Ext^d_R(\Ext^{d-g}_R(k,R/J),R)$$
which implies the assertion.
\end{enumerate}
\end{proof}

As a corollary of Theorem \ref{e:m3} in conjunction with Lemma
\ref{l:rank}, we obtain the following property of the Poincar\'{e}
series of the canonical module.

\begin{cor}\label{cpoincare}
Suppose that $(R,\fm)$ is a Cohen-Macaulay local ring and $J$ is an
ideal of $\grade$ $g\geq1$. Then
\begin{enumerate}[\quad\rm(a)]
\item if $g$ is odd, $\P_{\om_{R/J}}^{\leq g}(-1)\leq0$. In
particular, if $g=1$, $\bt_1^R(\om_{R/J})\geq\bt_0^R(\om_{R/J})$;

\item if $g$ is even, $\P_{\om_{R/J}}^{\leq g}(-1)\geq0$.

\end{enumerate}
\end{cor}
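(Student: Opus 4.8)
The plan is to combine the identity from Theorem~\ref{e:m3}(b) with the inequality of Lemma~\ref{l:rank}, applied to the cyclic module $R/I$, where (as in Theorem~\ref{e:m3}) $\A$ is a maximal regular sequence in $J$ and $I=\A:J$; recall that in the Cohen-Macaulay ring $R$ the grade and height of $J$ coincide, so this setup is available. Writing $\beta_k=\bt_k^R(R/I)$ and $S_i=\P_{R/I}^{\leq i}(-1)=\sum_{k=0}^{i}(-1)^k\beta_k$, Theorem~\ref{e:m3}(b) records
$$\P_{\om_{R/J}}^{\leq g}(-1)=(-1)^g\beta_{g+1}-S_g,$$
so the whole problem reduces to controlling the sign of the right-hand side, and this is exactly where Lemma~\ref{l:rank} should enter.

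First I would check that $R/I$ has a rank, equal to $0$: since $\A\subseteq I$ is a regular sequence of length $g\geq1$, the ideal $I$ has positive grade and hence contains a nonzerodivisor, and inverting it annihilates $R/I$ over the total ring of fractions $Q$, so $(R/I)\otimes_R Q=0$ is free of rank $0$. With $\rank(R/I)=0$, Lemma~\ref{l:rank} at the index $i=g$ gives $\beta_g-\beta_{g+1}\leq\sum_{j=1}^{g}(-1)^{j-1}\beta_{g-j}$. The next step is the elementary reindexing $k=g-j$, which turns the right-hand sum into $(-1)^{g-1}S_{g-1}$; I would verify this identity on a couple of small values of $g$ to pin down the sign convention, obtaining after rearrangement the single inequality
$$\beta_{g+1}-\beta_g\geq(-1)^g S_{g-1}.$$

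Finally I would substitute $S_g=S_{g-1}+(-1)^g\beta_g$ into the Theorem~\ref{e:m3}(b) formula to get $\P_{\om_{R/J}}^{\leq g}(-1)=(-1)^g(\beta_{g+1}-\beta_g)-S_{g-1}$, and then split by the parity of $g$. When $g$ is even the inequality above reads $\beta_{g+1}-\beta_g\geq S_{g-1}$, so the displayed quantity is $\geq0$, giving (b); when $g$ is odd, multiplication by $(-1)^g=-1$ reverses the inequality and forces the quantity to be $\leq0$, giving (a). The special case $g=1$ then reads $\bt_0^R(\om_{R/J})-\bt_1^R(\om_{R/J})\leq0$, i.e.\ $\bt_1^R(\om_{R/J})\geq\bt_0^R(\om_{R/J})$, as asserted. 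The only genuinely delicate point is the sign bookkeeping: the parity of $g$ appears both in the factor $(-1)^g$ multiplying $\beta_{g+1}-\beta_g$ and in the reindexing constant $(-1)^{g-1}$, so the main care needed is to track precisely when multiplication by $(-1)^g$ flips the direction of the inequality rather than to rely on one uniform computation.
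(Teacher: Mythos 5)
Your proposal is correct and follows essentially the same route as the paper: combine the identity of Theorem~\ref{e:m3}(b) with Lemma~\ref{l:rank} applied to $R/I$ at index $i=g$, and split by the parity of $g$. The only (welcome) differences are presentational — you package both parities into the single formula $\P_{\om_{R/J}}^{\leq g}(-1)=(-1)^g(\beta_{g+1}-\beta_g)-S_{g-1}$ before specializing, and you explicitly verify that $R/I$ has rank $0$ (a hypothesis of Lemma~\ref{l:rank} the paper uses tacitly).
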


\begin{proof}
For (a) by Lemma \ref{l:rank},
$\bt_g^R(R/I)-\bt_{g+1}^R(R/I)\leq\bt_0^R(R/I)-\bt_1^R(R/I)+\cdots+\bt_{g-1}^R(R/I)$.
On the other hand by Theorem \ref{e:m3}(b), $\P_{\om_{R/J}}^{\leq
g}(-1)=\bt_g^R(R/I)-\bt_{g+1}^R(R/I)-(\bt_0^R(R/I)-\bt_1^R(R/I)+\cdots+\bt_{g-1}^R(R/I))$
which yields the assertion. For (b), $\P_{\om_{R/J}}^{\leq
g}(-1)=\bt_{g+1}^R(R/I)+(-\bt_0^R(R/I)+\cdots+\bt_{g-1}^R(R/I))-\bt_g^R(R/I)$
which is non-negative by Lemma \ref{l:rank}.
\end{proof}

\textbf{Acknowledgements}.  The authors would like to thank
Professor M-T. Dibaei for his useful comments which brought
improvement in the Proposition \ref{e:m1}.


\end{document}